\documentclass[11pt,a4paper]{article}

\usepackage{array}
\usepackage{multirow}

\makeatletter
\newcommand{\thickhline}{%
    \noalign {\ifnum 0=`}\fi \hrule height 1pt
    \futurelet \reserved@a \@xhline
}
\newcolumntype{"}{@{\hskip\tabcolsep\vrule width 1pt\hskip\tabcolsep}}
\makeatother


\usepackage{graphics}
\usepackage{epsfig}
\usepackage{subfigure}
\usepackage{amssymb}
\usepackage{amsthm}
\usepackage{mathrsfs}
\usepackage{hhline}
\usepackage{longtable}
\usepackage{amsmath}
\usepackage{float}
\usepackage{picinpar}
\usepackage{cite}
\usepackage{enumerate}
\usepackage{xcolor}

\usepackage[mathlines]{lineno}
\modulolinenumbers[2]
\newcommand*\patchAmsMathEnvironmentForLineno[1]{%
\expandafter\let\csname old#1\expandafter\endcsname\csname #1\endcsname  \expandafter\let\csname oldend#1\expandafter\endcsname\csname end#1\endcsname  \renewenvironment{#1}%
{\linenomath\csname old#1\endcsname}%
{\csname oldend#1\endcsname\endlinenomath}}%
\newcommand*\patchBothAmsMathEnvironmentsForLineno[1]{%
\patchAmsMathEnvironmentForLineno{#1}%
\patchAmsMathEnvironmentForLineno{#1*}}%
\AtBeginDocument{%
\patchBothAmsMathEnvironmentsForLineno{equation}%
\patchBothAmsMathEnvironmentsForLineno{align}%
\patchBothAmsMathEnvironmentsForLineno{flalign}%
\patchBothAmsMathEnvironmentsForLineno{alignat}%
\patchBothAmsMathEnvironmentsForLineno{gather}%
\patchBothAmsMathEnvironmentsForLineno{multline}%
}


\def\blue{\color{blue}}
\def\red{\color{red}}

\def\cyan{\color{cyan}}

\def\violet{\color{violet}}

\setlength{\textwidth}{18cm}
\setlength{\textheight}{25cm}
\setlength{\topmargin}{-2cm}
\setlength{\voffset}{0in}
\setlength{\oddsidemargin}{-1cm}
\setlength{\evensidemargin}{-1cm}
\setlength{\unitlength}{1mm}


\newtheorem{theorem}{Theorem}[section]
\newtheorem{lemma}[theorem]{Lemma}
\newtheorem{example}{Example}[section]
\newtheorem{corollary}[theorem]{Corollary}
\newtheorem{problem}{Problem}[section]

\newtheorem{conjecture}{Conjecture}[section]

\numberwithin{equation}{section}
\def\Z{\mathbb{Z}}

\newtheorem{defi}{Definition}[section]




\usepackage{graphicx}
\graphicspath{%
    {converted_graphics/}
    {/}
}

\def\blue{\color{blue}}
\def\red{\color{red}}

\def\cyan{\color{cyan}}

\def\violet{\color{violet}}

\def\Z{\mathbb{Z}}

\begin{document}
\baselineskip18truept
\normalsize
\begin{center}
{\mathversion{bold}\Large \bf On number of pendants in local antimagic chromatic number}

\bigskip
{\large  Gee-Choon Lau{$^{a,}$}\footnote{Corresponding author.}, Wai-Chee Shiu{$^{b,c}$}, Ho-Kuen Ng{$^d$}}\\

\medskip

\emph{{$^a$}Faculty of Computer \& Mathematical Sciences,}\\
\emph{Universiti Teknologi MARA (Segamat Campus),}\\
\emph{85000, Johor, Malaysia.}\\
\emph{geeclau@yahoo.com}\\

\medskip

\emph{{$^b$}College of Global Talents, Beijing Institute of Technology,\\ Zhuhai, China.}\\
\emph{{$^c$}Department of Mathematics, The Chinese University of Hong Kong,}\\
\emph{Shatin, Hong Kong.}\\

\emph{wcshiu@associate.hkbu.edu.hk}\\

\medskip

\emph{{$^d$}Department of Mathematics, San Jos\'{e} State University,}\\
\emph{San Jos\'e CA 95192 USA.}\\
\emph{ho-kuen.ng@sjsu.edu}\\
\end{center}


\medskip
\begin{abstract}
An edge labeling of a connected graph $G = (V, E)$ is said to be local antimagic if it is a bijection $f:E \to\{1,\ldots ,|E|\}$ such that for any pair of adjacent vertices $x$ and $y$, $f^+(x)\not= f^+(y)$, where the induced vertex label $f^+(x)= \sum f(e)$, with $e$ ranging over all the edges incident to $x$.  The local antimagic chromatic number of $G$, denoted by $\chi_{la}(G)$, is the minimum number of distinct induced vertex labels over all local antimagic labelings of $G$. Let $\chi(G)$ be the chromatic number of $G$. In this paper, sharp upper and lower bounds of $\chi_{la}(G)$ for $G$ with pendant vertices, and sufficient conditions for the bounds to equal, are obtained.  Consequently, for $k\ge 1$, there are infinitely many graphs with $k \ge \chi(G) - 1$ pendant vertices and $\chi_{la}(G) = k+1$. We conjecture that every tree $T_k$, other than certain caterpillars, spiders and lobsters, with $k\ge 1$ pendant vertices has $\chi_{la}(T_k) = k+1$.

\medskip
\noindent Keywords: Local antimagic labeling, local antimagic chromatic number, cut-vertices, pendants.
\medskip

\noindent 2010 AMS Subject Classifications: 05C78, 05C69.
\end{abstract}

\tolerance=10000
\baselineskip12truept
\def\qed{\hspace*{\fill}$\Box$\medskip}

\def\s{\,\,\,}
\def\ss{\smallskip}
\def\ms{\medskip}
\def\bs{\bigskip}
\def\c{\centerline}
\def\nt{\noindent}
\def\ul{\underline}
\def\lc{\lceil}
\def\rc{\rceil}
\def\lf{\lfloor}
\def\rf{\rfloor}
\def\a{\alpha}
\def\b{\beta}
\def\n{\nu}
\def\o{\omega}
\def\ov{\over}
\def\m{\mu}
\def\t{\tau}
\def\th{\theta}
\def\k{\kappa}
\def\l{\lambda}
\def\L{\Lambda}
\def\g{\gamma}
\def\d{\delta}
\def\D{\Delta}
\def\e{\epsilon}
\def\lg{\langle}
\def\rg{\tongle}
\def\p{\prime}
\def\sg{\sigma}
\def\to{\rightarrow}

\newcommand{\K}{K\lower0.2cm\hbox{4}\ }
\newcommand{\cl}{\centerline}
\newcommand{\om}{\omega}
\newcommand{\ben}{\begin{enumerate}}

\newcommand{\een}{\end{enumerate}}
\newcommand{\bit}{\begin{itemize}}
\newcommand{\eit}{\end{itemize}}
\newcommand{\bea}{\begin{eqnarray*}}
\newcommand{\eea}{\end{eqnarray*}}
\newcommand{\bear}{\begin{eqnarray}}
\newcommand{\eear}{\end{eqnarray}}

\section{Introduction}

\nt A connected graph $G = (V, E)$ is said to be {\it local antimagic} if it admits a {\it local antimagic edge labeling}, i.e., a bijection $f : E \to \{1,\dots ,|E|\}$ such that the induced vertex labeling $f^+ : V \to \Z$ given by $f^+(u) = \sum f(e)$ (with $e$ ranging over all the edges incident to $u$) has the property that any two adjacent vertices have distinct induced vertex labels~\cite{Arumugam}. Thus, $f^+$ is a coloring of $G$. Clearly, the order of $G$ must be at least 3.  The vertex label $f^+(u)$ is called the {\it induced color} of $u$ under $f$ (the {\it color} of $u$, for short, if no ambiguous occurs). The number of distinct induced colors under $f$ is denoted by $c(f)$, and is called the {\it color number} of $f$. The {\it local antimagic chromatic number} of $G$, denoted by $\chi_{la}(G)$, is $\min\{c(f) : f\mbox{ is a local antimagic labeling of } G\}$. Clearly, $2\le \chi_{la}(G)\le |V(G)|$. Haslegrave\cite{Haslegrave} proved that every graph is local antimagic. This paper relates the number of pendant vertices of $G$ to $\chi_{la}(G)$. Sharp upper and lower bounds, and sufficient conditions for the bounds to equal, are obtained. Consequently, there exist infinitely many graphs with $k\ge \chi(G)-1 \ge 1$ pendant vertices and $\chi_{la}(G)=k+1$.  We conjecture that every tree $T_k$, other than certain caterpillars, with $k\ge 1$ pendant vertices has $\chi_{la}(T_k) = k+1$. The following two results in~\cite{LSN2} are needed. 

\begin{lemma}\label{lem-pendant} Let $G$ be a graph of size $q$ containing $k$ pendants. Let $f$ be a local antimagic labeling of $G$ such that $f(e)=q$. If $e$ is not a pendant edge, then $c(f)\ge k+2$.\end{lemma}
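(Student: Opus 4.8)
The plan is to produce, directly and explicitly, $k+2$ vertices whose induced colors are pairwise distinct. First I would record a structural remark: since $G$ is connected and $|V(G)|\ge 3$, no two pendant vertices can be adjacent, so the $k$ pendant vertices $u_1,\dots,u_k$ have $k$ \emph{distinct} pendant edges $e_1,\dots,e_k$. Because $f$ is a bijection, the colors $f^+(u_i)=f(e_i)$ are then $k$ distinct values. Moreover, since $f(e)=q$ and $e$ is not a pendant edge, none of the $e_i$ receives the label $q$; hence $f^+(u_i)=f(e_i)\le q-1$ for every $i$.

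Next I would examine the endpoints $x,y$ of the edge $e$. As $e$ is not a pendant edge, $\deg(x)\ge 2$ and $\deg(y)\ge 2$, so neither $x$ nor $y$ lies in $\{u_1,\dots,u_k\}$, and each of $x,y$ is incident with $e$ together with at least one further edge carrying a positive label. Consequently $f^+(x)\ge q+1$ and $f^+(y)\ge q+1$. Since $x$ and $y$ are adjacent through $e$ and $f$ is local antimagic, $f^+(x)\ne f^+(y)$. Thus $f^+(x)$ and $f^+(y)$ are two distinct colors, each exceeding $q-1$, and therefore different from every $f^+(u_i)$. Counting the colors of $u_1,\dots,u_k,x,y$ yields $c(f)\ge k+2$.

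I do not anticipate a real obstacle: the only points requiring care are (i) justifying that the $k$ pendant edges are genuinely distinct, which rests on connectedness together with $|V(G)|\ge 3$, and (ii) the inequalities $f^+(x),f^+(y)\ge q+1$, which is precisely where the hypothesis that $e=xy$ is not a pendant edge is used — it guarantees that each of $x$ and $y$ has a second incident edge contributing a strictly positive amount on top of the maximum label $q$, pushing both of their colors above all pendant colors.
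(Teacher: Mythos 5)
Your proof is correct, and since the paper only imports this lemma from \cite{LSN2} without reproducing its proof, there is nothing to diverge from: your argument (the $k$ pendant colors are the $k$ distinct labels of their distinct pendant edges, all at most $q-1$, while both endpoints of $e$ have degree at least $2$ and hence distinct colors at least $q+1$) is exactly the standard one. All the delicate points — distinctness of the pendant edges via connectedness and order at least $3$, and the use of the non-pendant hypothesis to push $f^+(x),f^+(y)$ above $q$ — are handled correctly.
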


\begin{theorem}\label{thm-pendant}  Let $G$ be a graph having $k$ pendants. If $G$ is not $K_2$, then $\chi_{la}(G)\ge k+1$ and the bound is sharp.\end{theorem}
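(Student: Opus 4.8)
The plan is to prove the lower bound $\chi_{la}(G)\ge k+1$ first, and then separately exhibit a family of graphs (or a single graph for each $k$) realizing equality to establish sharpness. For the lower bound, let $f$ be any local antimagic labeling of $G$ with $q=|E(G)|$, and let $v_1,\dots,v_k$ be the pendant vertices, with $u_i$ the unique neighbour of $v_i$. The key observation is that the pendant vertices themselves receive pairwise... actually they need not be pairwise adjacent, so instead I would argue on the edge labels: the $k$ pendant edges receive $k$ distinct labels, hence the induced colors $f^+(v_1),\dots,f^+(v_k)$ are $k$ \emph{distinct} values (each $f^+(v_i)$ equals the label of its pendant edge). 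So we already have $k$ distinct colors among the pendants. To get one more, I would use Lemma~\ref{lem-pendant}: consider where the maximum label $q$ sits. If $q$ is not on a pendant edge, Lemma~\ref{lem-pendant} immediately gives $c(f)\ge k+2>k+1$. If $q$ \emph{is} on a pendant edge, say the one at $v_j$, then $f^+(v_j)=q$ is the strict maximum among the pendant colors; but $u_j$ (the support vertex adjacent to $v_j$) has $f^+(u_j)\ge q$ as well since it is incident to that edge of label $q$, and $f^+(u_j)\ne f^+(v_j)=q$ forces $f^+(u_j)>q$, a color not equal to any pendant color (all of which are $\le q$). Hence $c(f)\ge k+1$ in every case, and since $f$ was arbitrary, $\chi_{la}(G)\ge k+1$.

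For sharpness I would produce, for each $k\ge 1$, an explicit graph $G$ with exactly $k$ pendants and a local antimagic labeling using exactly $k+1$ colors. The natural candidate is a \emph{star-like} or \emph{spider} construction: e.g. take a path or a small core graph and attach $k$ pendant edges, then hand-label. For instance, one can try the star $K_{1,k}$ itself when $k$ is such that it works, or more robustly a ``broom'' (a path $P_m$ with $k$ pendant edges attached at one end), choosing $m$ and the labeling so that the $k$ pendant vertices get $k$ distinct colors and every internal vertex's color coincides with one of those $k$ values except one extra. Alternatively, cite a known result from~\cite{Arumugam} or~\cite{LSN2}: it is known that $\chi_{la}(K_{1,k}) = k+1$ for appropriate $k$ (the star has $k$ pendants and its local antimagic chromatic number is $k+1$), which directly gives sharpness. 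I would phrase the sharpness part as: ``the friendship graph / star / a specific family in~\cite{LSN2} attains the bound,'' and either reproduce the short labeling or reference it.

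The main obstacle is the case analysis in the lower bound when the label $q$ lies on a pendant edge — ensuring that we genuinely find a $(k+1)$-st distinct color. The argument above via $f^+(u_j) > q$ works, but one must double-check the degenerate situations: what if $u_j$ itself has degree so small that... actually $u_j$ has degree at least $2$ unless $G=K_2$, which is excluded; and if $u_j$ is adjacent to \emph{another} pendant $v_i$, then $f^+(u_j)$ could in principle be forced, but it is still $>q$ hence distinct from all $f^+(v_i)\le q$, so the argument survives. A subtler point: one should confirm that when $G$ has exactly one vertex of positive degree besides pendants (i.e. $G=K_{1,k}$), the pendants are mutually non-adjacent, the colors $f^+(v_i)$ are the labels $1,\dots,k$ in some order, $f^+(u_j)=\binom{k+1}{2}$, giving exactly $k+1$ colors — this simultaneously verifies the bound and its sharpness in the cleanest case. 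I expect the write-up to spend most of its length on the ``$q$ on a pendant edge'' subcase and on pinning down the sharpness example precisely.
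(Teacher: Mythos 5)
Your lower-bound argument is correct, but note that the paper does not actually prove this theorem: it is imported verbatim from \cite{LSN2} (together with Lemma~\ref{lem-pendant}), and the text following the statement only exhibits sharpness examples. So there is no in-paper proof to match against; judged on its own, your argument works. The $k$ pendant colors are distinct because each equals the label of its own pendant edge, and your case split on where the label $q$ sits correctly produces a $(k+1)$-st color: if $q$ is on a non-pendant edge, Lemma~\ref{lem-pendant} gives $c(f)\ge k+2$, and if $q$ is on the pendant edge at $v_j$, the support vertex $u_j$ has degree at least $2$ (since $G\ne K_2$), hence $f^+(u_j)\ge q+1$ exceeds every pendant color. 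In fact you can dispense with the case split and with Lemma~\ref{lem-pendant} altogether: the edge labeled $q$ has at least one non-pendant endpoint $x$ (an edge with two pendant endpoints would be a $K_2$ component), and $f^+(x)\ge q+1$ is already a color distinct from all $k$ pendant colors, which are at most $q$. For sharpness, your verification that $K_{1,k}$ attains $k+1$ for $k\ge 2$ is exactly the paper's remark that the star $S_k$ is sharp. The one loose end is $k=1$: the star degenerates to $K_2$ and a broom has two pendants, so you still owe an explicit one-pendant graph with $\chi_{la}=2$ (the paper points to a labeled figure of such a graph); until that example is pinned down, your sharpness claim is only established for $k\ge 2$.
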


\nt The sharp bound for $k\ge 2$ is given by the star $S_k$ with maximum degree $k$. The left labeling below is another example for $k=2$. The right labeling shows that the lower bound is sharp for $k=1$.\\\\
\centerline{\epsfig{file=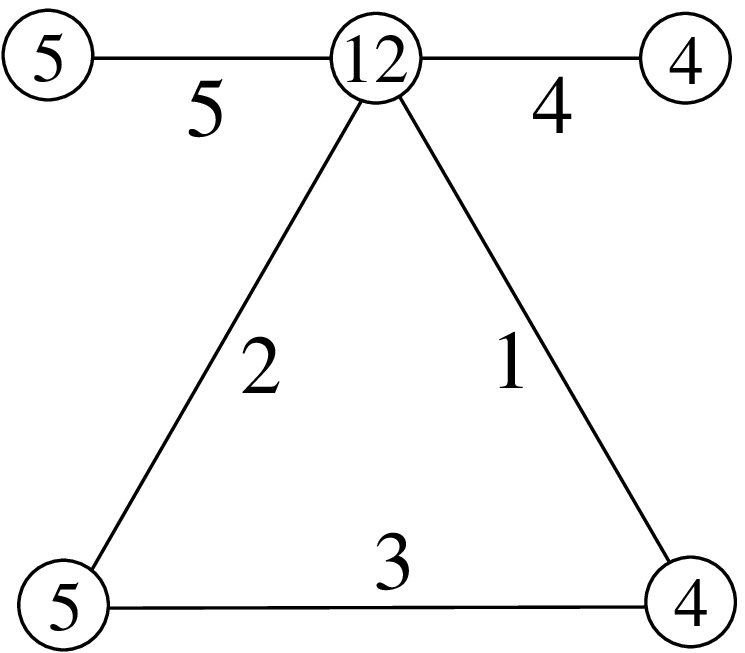, width=2.5cm}\qquad \epsfig{file=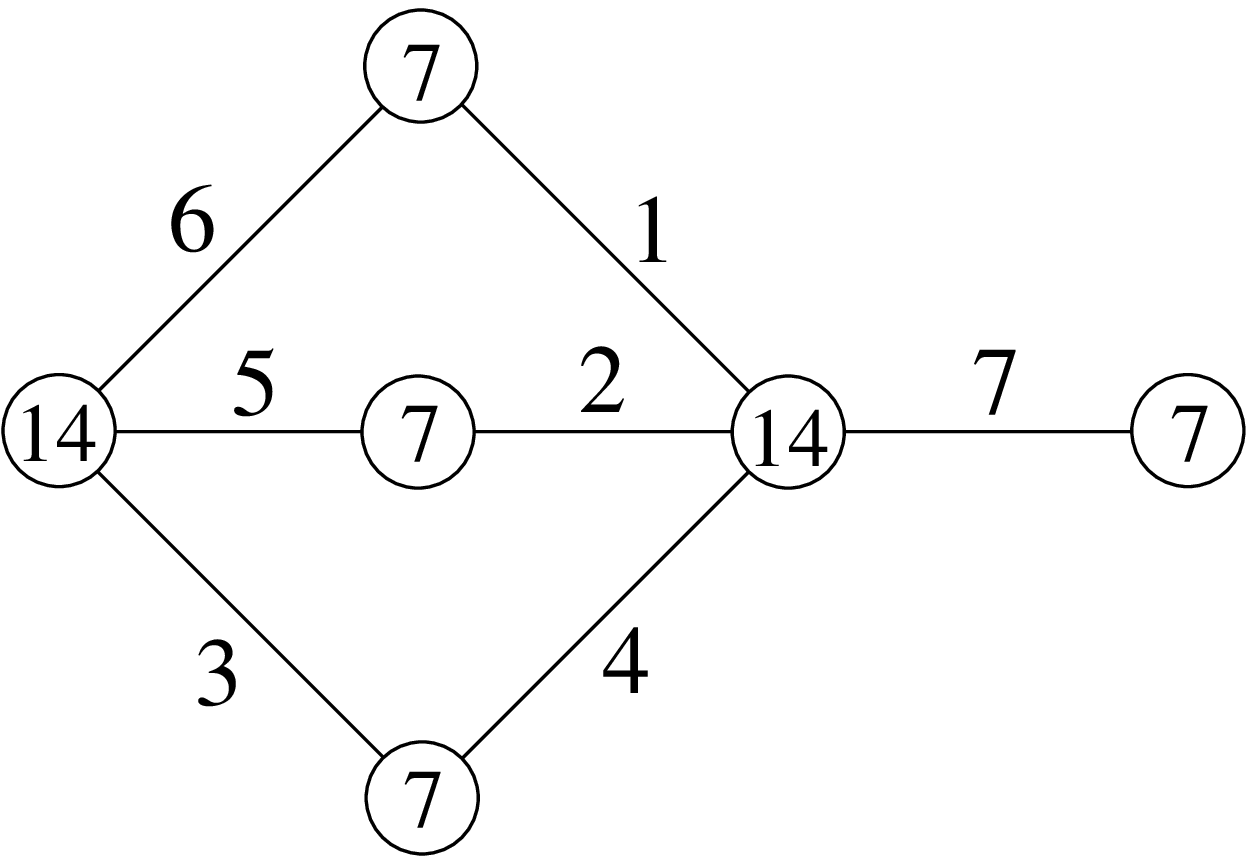, width=3.3cm}}\\

\nt For $ 1\le i\le t, a_i, n_i\ge 1$ and $d=\sum^t_{i=1} n_i\ge 3$, a spider of $d$ legs, denoted $Sp(a_1^{[n_1]}, a_2^{[n_2]}, \ldots, a_t^{[n_t]})$, is a tree formed by identifying an end-vertex of $n_i$ path(s) of length $a_i$. The vertex $u$ of degree $d$ is the core of the spider. Note that $Sp(1^{[n]})$ is the star graph of $n$ pendant vertices with $\chi_{la}(Sp(1^{[n]})) = n+1$. We first give a family of spiders with $k$ pendant vertices to have $\chi_{la} > k+1$.

\begin{theorem} For $n\ge 3$, $$\chi_{la}(Sp(2^{[n]})=\begin{cases}n+2 &\mbox{ if } n \ge 4 \\ n+1 &\mbox{ otherwise.} \end{cases}$$
\end{theorem}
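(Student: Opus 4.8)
The plan is to treat the two regimes separately: prove $\chi_{la}=n+1$ for $n=3$ via an explicit labeling, and $\chi_{la}=n+2$ for $n\ge 4$ by matching a lower-bound argument with an explicit construction.

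Set up notation first. Let $u$ be the core, let $v_1,\dots,v_n$ be the degree-$2$ vertices and $w_1,\dots,w_n$ the pendants, so the $2n$ edges split into the inner edges $uv_i$ and the pendant edges $v_iw_i$. For any local antimagic labeling $f$ we have $f^+(w_i)=f(v_iw_i)$, so the $n$ pendant colors are automatically distinct elements of $\{1,\dots,2n\}$; moreover $f^+(v_i)=f(uv_i)+f(v_iw_i)>f^+(w_i)$, and $f^+(u)=\sum_{i=1}^n f(uv_i)$. Theorem~\ref{thm-pendant} already gives $\chi_{la}(Sp(2^{[n]}))\ge n+1$ in all cases.

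For the lower bound when $n\ge 4$, suppose toward a contradiction that some labeling $f$ has $c(f)=n+1$. By Lemma~\ref{lem-pendant} the edge carrying the maximum label $q=2n$ must be a pendant edge, say $f(v_jw_j)=2n$; then $f^+(v_j)=f(uv_j)+2n>2n$, so $f^+(v_j)$ differs from all $n$ pendant colors and must therefore be the unique extra color. Hence every remaining vertex color lies among the pendant colors or equals $f^+(v_j)$; since $u$ is adjacent to $v_j$, this forces $f^+(u)\le 2n$. But $f^+(u)$ is a sum of $n$ distinct labels drawn from $\{1,\dots,2n-1\}$ (the label $2n$ sits on $v_jw_j$), hence $f^+(u)\ge 1+2+\cdots+n=\frac{n(n+1)}{2}$, and $\frac{n(n+1)}{2}>2n$ exactly when $n\ge 4$ --- a contradiction. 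Thus $\chi_{la}(Sp(2^{[n]}))\ge n+2$ for $n\ge 4$.

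For the upper bounds I would exhibit explicit labelings. When $n\ge 4$, set $f(uv_i)=i$ and $f(v_iw_i)=2n+1-i$ for $1\le i\le n$; then every $f^+(v_i)=2n+1$, the pendant colors are $n+1,\dots,2n$, and $f^+(u)=\frac{n(n+1)}{2}$, which for $n\ge 4$ is strictly larger than $2n+1$, so exactly $n+2$ colors appear and every adjacent pair receives distinct colors. When $n=3$, the labeling with inner labels $1,2,3$ and pendant labels $4,6,5$ on the three legs gives vertex colors $\{4,5,6,8\}$, establishing $\chi_{la}(Sp(2^{[3]}))\le 4=n+1$. The only genuinely delicate step is the lower bound for $n\ge 4$: once one notes that Lemma~\ref{lem-pendant} pins the label $2n$ onto a pendant edge, which lifts $f^+(v_j)$ strictly above every pendant color, the elementary inequality $\frac{n(n+1)}{2}>2n$ closes the argument; the $n=3$ case is exactly where this inequality fails, explaining the exceptional value.
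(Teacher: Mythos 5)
Your proposal is correct and follows essentially the same route as the paper: Lemma~\ref{lem-pendant} forces the label $2n$ onto a pendant edge, the inequality $\frac{n(n+1)}{2}>2n$ (valid exactly for $n\ge 4$) yields the lower bound $n+2$, and the upper bound uses the identical labeling $f(uv_i)=i$, $f(v_iw_i)=2n+1-i$. The only differences are cosmetic: you phrase the lower bound as a contradiction rather than a direct count of $n+2$ colors, and you supply an explicit $4$-color labeling for $n=3$ where the paper merely asserts that case is easy.
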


\begin{proof} Let the neighbors of $u$ be $v_1, \dots, v_n$. Let $w_i$ be the pendant adjacent to $v_i$, $1\le i\le n$. By Theorem~\ref{thm-pendant}, it is easy to conclude that $\chi_{la}(Sp(2^{[3]})) = 4$. Consider $n\ge 4$. Let $f$ be a local antimagic labeling of $Sp(2^{[n]})$ with minimum $c(f)$. By Lemma~\ref{lem-pendant} and symmetry, suffice to consider $f(v_1w_1)=2n = f^+(w_1)$. By definition, $f^+(u)\ne f^+(v_1)$. Now, $f^+(u) > n(n+1)/2 > 2n \ge f^+(w_i)$ and $f^+(v_1) = 2n > f^+(w_i)$ for $1\le i\le n$. Thus, $c(f)\ge n+2$ and $\chi_{la}(Sp(2^{[n]}))\ge n+2$.  \\

\nt Define a bijection $f : E(Sp(2^{[n]}))\to [1,2n]$ such that $f(uv_i) = i$ and $f(v_iw_i) = 2n + 1 - i$. We have $f^+(u) = n(n+1)/2 > f^+(v_i) = 2n+1 > f^+(w_i) = 2n + 1 - i$, $1\le i\le n$. Consequently, $\chi_{la}(Sp(2^{[n]}))= n+2$ for $n\ge 4$. The theorem holds.
\end{proof}

\section{Adding pendant edges}

Suppose $G$ has $e\ge 2$ edges with $\chi_{la}(G) = t\ge 2$ such that the corresponding local antimagic labeling $f$ induces a $t$-independent set $\{V_1, V_2, \ldots, V_t\}$ with $|V_i|=n_i\ge 1$. Moreover, each non-pendant vertex must be in one of $V_i$ for $1\le i\le r\le t$, and $V_i$ is a singleton consisting of a pendant vertex for $r+1\le i\le t$ if $r < t$. Let $f^+(v) = c_i$ for each vertex $v\in V_i$. Without loss of generality, we assume that $c_1 < c_2 <\cdots < c_r$ and that $c_{r+1} < c_{r+2} < \cdots < c_t$. Note that $c_{r+1}$ to $c_t$ do not exist for $r=t$. By Theorem~\ref{thm-pendant}, $G$ contains at most $t-1$ pendant vertices. \\

Let $b\ge 0$ be the number of pendant vertices in $\cup^{r}_{i=1} V_i$ so that $G$ has exactly $t-r+b$ pendant vertices. Let $f(u'v')=e \le f^+(u') < f^+(v')$ such that $v'\in V_i, 1\le i\le r$ and that $u'\in V_j$. Suppose $e= f^+(u')$, then $u'$ is a pendant vertex. We now have (a) $j < i \le r$ so that $1\le b\le j$ or (b) $i \le r < j$ so that $0\le b \le i-1$.  Otherwise, if $e < f^+(u')$, then $u'$ is not a pendant vertex so that $j < i \le r $ and $0\le b\le j-1$. \\


 Let $G(V_i,s)$ be a graph obtained from $G$ by adding $s$ pendant edges $v_aw_{a,k}$ $(1\le k\le s)$ to each vertex in $V_i=\{v_a\,|\,1\le a\le n_i\}$.\\


\begin{theorem}\label{thm-addpendant} Suppose $G$ and $G(V_i,s)$ are as defined above. Let $r\ge 2$ and
$$e+sn_i\ge \begin{cases}c_r &\mbox{ for } i<r\\ c_{r-1} &\mbox{ for } i = r \end{cases}$$
such that $s\ge 1$ if $n_i=1$, and $s\ge 2$ is even if $n_i\ge 2$.

\begin{enumerate}[(1)]
  \item Suppose $e < c_1$. For $1\le i\le r$, $\chi_{la}(G(V_i,s)) = sn_i + t-r + 1$.
  \item Suppose $c_1 \le e < c_2$, then $\chi_{la}(G(V_1,s)) = sn_1 + t-r + 1$.  For $2\le i \le r$, $sn_i + t-r + b + 1\le \chi_{la}(G(V_i,s)) \le sn_i + t-r + 2$. Moreover, if $c_1=e$ and $b=1$, then $\chi_{la}(G(V_i,s)) = sn_i + t-r + 2$.
  \item Suppose $c_{j-1} \le e < c_{j}$ for $3\le j\le r$, then
  \begin{enumerate}[(a)]
    \item $sn_i + t-r+b \le \chi_{la}(G(V_i,s)) \le sn_i + t-r+j-1$ for $1\le i\le j-1$, if $V_i$ has a pendant vertex;
    \item $sn_i + t-r+b+1 \le \chi_{la}(G(V_i,s)) \le sn_i + t-r+j-1$ for $1\le i\le j-1$, if $V_i$ has no pendant vertex;
    \item $sn_i + t-r+b+1\le \chi_{la}(G(V_i,s)) \le sn_i + t-r+j$ for $j\le i\le r$.
  \end{enumerate}
Moreover, when $c_{j-1} = e$ and $b=j-1$, $\chi_{la}(G(V_i,s)) = sn_i + t-r+j-1$ for $1\le i\le j-1$, and $\chi_{la}(G(V_i,s)) = sn_i + t-r+j$ for $j\le i\le r$.
\end{enumerate}
In particular, if $c_{r-1}\le e < c_r$, then in $G(V_r,s)$, the condition on $e+sn_i$ is simplified to $s\ge 1$ for $n_r=1$, and $s\ge 2$ is even for $n_r\ge 2$.
\end{theorem}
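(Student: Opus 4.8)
The plan is to prove the two bounds separately in each case, lower bound via the adjacency/color-counting machinery of Lemma~\ref{lem-pendant} and Theorem~\ref{thm-pendant}, upper bound by explicitly modifying the given optimal labeling $f$ of $G$. For the lower bound, note that $G(V_i,s)$ has exactly $sn_i + (t-r+b)$ pendant vertices: the $sn_i$ newly added leaves together with the $b$ old pendants in $\cup_{\ell=1}^r V_\ell$ and the $t-r$ old singleton pendants $V_{r+1},\dots,V_t$. Wait — one must be careful: adding leaves to $V_i$ may turn a former pendant in $V_i$ into a non-pendant, so the count of pendants is $sn_i + t - r + b'$ where $b'$ adjusts $b$ by whether $V_i$ itself contained a pendant. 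This bookkeeping is exactly what distinguishes cases (a) and (b) in part (3), and I would set it up carefully first. Theorem~\ref{thm-pendant} then gives $\chi_{la}(G(V_i,s)) \ge (\text{number of pendants}) + 1$, which is the generic lower bound; when additionally the edge carrying the largest label $e' := |E(G(V_i,s))| = e + sn_i$ is forced (by the hypothesis $e+sn_i \ge c_r$ or $c_{r-1}$) to be a non-pendant edge, Lemma~\ref{lem-pendant} upgrades this by $+1$, yielding the $+1$ terms in (2), (3b), (3c).

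For the upper bound I would construct a local antimagic labeling $g$ of $G(V_i,s)$ from $f$. Keep $g = f$ on $E(G)$. On the $s$ pendant edges attached at a vertex $v_a \in V_i$, assign the $sn_i$ largest available labels $\{e+1,\dots,e+sn_i\}$, distributed so that each $v_a$ receives a block whose sum is the same constant $S$ for every $a$ (this is where parity of $s$ matters when $n_i \ge 2$: pair up labels $e+kn_i+j$ and $e+(s-1-k)n_i+j$ across the $s/2$ "rounds" so that every vertex of $V_i$ gets the identical total $S$; for $n_i=1$ a single vertex just takes all of them and any $s\ge 1$ works). Under $g$, each new leaf $w_{a,k}$ gets a distinct color $e+kn_i+j$, contributing $sn_i$ new colors; each $v_a\in V_i$ gets color $c_i + S$; and all other vertices retain their $f$-colors $c_1,\dots,c_t$ except that $c_i$ is no longer used on the old $V_i$ vertices. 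One checks adjacency: the new leaves have colors in $(e,\,e+sn_i]$, and the hypothesis $e+sn_i\ge c_r$ (resp.\ $c_{r-1}$) ensures $c_i+S$ and every old color lies outside or is separated appropriately from that window, and the old coloring was already proper. Counting distinct colors: $sn_i$ from new leaves, plus the surviving old colors — which is where the case split on $e$ versus $c_1,\dots,c_{j}$ enters, since the position of $e$ determines how many of $c_1,\dots,c_r$ can be "reused" / coalesced and hence whether we land at $sn_i+t-r+j-1$, $+j$, etc.

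The "in particular" clause at the very end — the case $c_{r-1}\le e< c_r$, which is precisely case (3) with $j=r$ (or case (2) when $r=2$) applied to $i=r$ — follows by reexamining the forcing argument: when we add leaves only to $V_r$ and $e \ge c_{r-1}$ is already assumed, the extra demand "$e+sn_i\ge c_{r-1}$" is automatically met for $n_r\ge 1$ (indeed $e+sn_r \ge e \ge c_{r-1}$ since $s\ge 1$ or $s\ge 2$), so no further lower bound on $s$ beyond the parity condition is needed for the construction to go through. I would verify this by tracing the single inequality through the upper-bound construction above with $i=r$.

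The main obstacle I expect is the upper-bound construction in parts (2) and (3): making the modified labeling $g$ simultaneously (i) a bijection onto $\{1,\dots,e+sn_i\}$, (ii) give equal leaf-sums $S$ at all vertices of $V_i$ (forcing the parity hypothesis), and (iii) keep the coloring proper at the boundary between old colors and the new window $(e, e+sn_i]$ and at the shifted color $c_i+S$ — all while hitting the exact target color count. The delicate point is that $c_i+S$ must avoid every other color, and that when $V_i$ contained an old pendant (case 3a) that pendant is absorbed into a non-leaf and its old singleton color may vanish, changing the count by one; getting the "moreover" equalities (when $e=c_{j-1}$ and $b=j-1$) requires checking both bounds coincide, i.e.\ that the construction is forced to be optimal, which needs the sharpest form of the Lemma~\ref{lem-pendant} argument. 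I would organize the write-up as: (0) pendant count and generic bounds; (1) the leaf-labeling gadget with parity; (2) case $e<c_1$; (3) case $c_1\le e<c_2$; (4) cases $c_{j-1}\le e<c_j$; (5) the "moreover" equalities; (6) the "in particular" simplification.
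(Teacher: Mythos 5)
Your proposal follows essentially the same route as the paper's proof. The upper bound is obtained exactly as you describe: extend $f$ by distributing $\{e+1,\dots,e+sn_i\}$ over the new pendant edges in paired rounds so that every vertex of $V_i$ receives the same added sum (this is the paper's $g_i$, and is precisely where the parity condition on $s$ enters), note that the new colour of each $v_a\in V_i$ exceeds $e+sn_i$, and then count how many of $c_1,\dots,c_r$ fall inside the window $[e+1,e+sn_i]$ (and hence coalesce with leaf colours) versus below it; the hypothesis $e+sn_i\ge c_r$ (resp.\ $c_{r-1}$) guarantees all the relevant old colours land inside the window. The lower bound is obtained by counting the pendants of $G(V_i,s)$ and applying Theorem~\ref{thm-pendant}.

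One step of your lower-bound plan is incorrect, although it happens to be redundant. You attribute the ``$+1$'' terms in (2), (3b) and (3c) to Lemma~\ref{lem-pendant}, on the grounds that the maximum label $e+sn_i$ is ``forced to be a non-pendant edge.'' It is not forced: Lemma~\ref{lem-pendant} is a statement about a \emph{given} labeling, and an arbitrary (in particular an optimal) local antimagic labeling of $G(V_i,s)$ is free to place the label $e+sn_i$ on one of the $sn_i$ new pendant edges, so the lemma yields no unconditional improvement; the paper never invokes it here. The true source of the $+1$ is the pendant count itself, which you also identify as the (a)/(b) bookkeeping: a pendant of $G$ in $\cup_{\ell\le r}V_\ell$ has induced colour equal to its unique edge label $\le e$, so distinct pendants lie in distinct classes among $V_1,\dots,V_{j-1}$, and in particular $V_i$ contains no pendant whenever $c_i>e$ (automatic in (2) for $i\ge 2$ and in (3c)). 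In those cases none of the $b$ old pendants is absorbed, $G(V_i,s)$ has $sn_i+t-r+b$ pendants, and Theorem~\ref{thm-pendant} alone gives the stated bound. Likewise the ``moreover'' equalities need no sharpened lemma: $b=j-1$ forces each of $V_1,\dots,V_{j-1}$ to contain exactly one pendant, so only case (a) occurs and the two bounds already coincide. With the Lemma~\ref{lem-pendant} step deleted and the counting carried through as you otherwise outline, your argument matches the paper's.
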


\begin{proof}  Note that $G$ must contain a non-pendant vertex $v$ such that $f^+(v)>e$. So $c_r > e\ge c_t$.  Moreover, $uv\not\in E(G)$ implies that $uv\not\in E(G(V_i,s))$.  For $1\le i\le r$, define a bijection $g_i: E(G(V_i,s))\to [1,e+sn_i]$ such that $g_i(e) = f(e)$ if $e\in E(G)$ and that
$$g_i(v_aw_{a,k})= \textstyle e+[k+\frac{(-1)^k}{2}-\frac{1}{2}]n_i -(-1)^k(a-\frac{1}{2})+\frac{1}{2}=\begin{cases}e+(k-1)n_i+a &\mbox{ if } k \mbox{ is odd,}\\ e+kn_i +1-a & \mbox{ if } k \mbox{ is even.}\end{cases}$$
Therefore $g_i^+(v_a) = c_i + es + \frac{s}{2}(sn_i+1)> e+sn_i$ when $s$ is even; $g_i^+(v_a) = c_i + es + \frac{s}{2}(s+1)+a-1> e+sn_i$ when $s$ is odd (in this case $n_i=1$ and $a=1$).

\ms\nt Observe that $g_i^+(v) = f^+(v)$ for each $v\not\in V_i$ so that $c_1, \ldots, c_{i-1}, c_{i+1}, \ldots, c_t$ are vertex colors under $g_i$. Moreover, for $v_a\in V_i$, $f^+(v_a)$ is replaced by $g_i^+(v_a)>c_t$ for $1\le a\le n_i$ and\break $\{g_i^+(w_{a,k}) = g_i(v_aw_{a,k})\;|\;1\le a\le n_i, 1\le k\le s\} = [e+1, e+sn_i]$.

\begin{enumerate}[(1)]
  \item Since $e<c_1$, in $G(V_i,s)$, we have $c_{r+1} < \cdots <c_t < e+1 \le c_1 < \cdots < c_r \le e+sn_i$ when $1\le i<r$, and $c_{r+1} < \cdots <c_t < e+1 \le c_1 < \cdots < c_{r-1} \le e+sn_i$ when $i=r$. So $(\{c_j\;|\; 1\le j\le r\}\setminus\{c_i\}) \subset [e+1, e+sn_i]$ but all the $c_{r+1}, \ldots, c_t$ are not in $[e+1, e+sn_i]$.
      Since $c_i + es + \frac{s}{2}(n_is+1)> e+sn_i$, $g_i$ is a local antimagic labeling with induced vertex color set $\{c_{r+1}, c_{r+2}, \ldots, c_t\}\cup [e+1, e+sn_i] \cup \{c_i + es + \frac{s}{2}(n_is+1)\}$. Therefore, $\chi_{la}(G(V_i,s)) \le sn_i + t-r + 1$. Since $G(V_i,s)$ contains at least $sn_i+t-r$ pendant vertices, by Theorem~\ref{thm-pendant}, $\chi_{la}(G(V_i,s)) \ge sn_i + t-r + 1$. Thus, $\chi_{la}(G(V_i,s)) = sn_i + t-r + 1$.

  \item For $c_1\le e < c_2$, in $G(V_i,s)$, similar to the above case, $(\{c_j\;|\; 1\le j\le r\}\setminus\{c_1,c_i\})  \subset [e+1, e+sn_i]$ but all the $c_1, c_{r+1}, \ldots, c_t$ are not in $[e+1, e+sn_i]$.

      Suppose $i=1$, then $g_1$ is a local antimagic labeling with induced vertex color set $\{c_{r+1}, c_{r+2}, \ldots, c_t\}\cup [e+1, e+sn_1] \cup \{c_1 + es + \frac{s}{2}(n_1s+1)\}$.
      Thus, $\chi_{la}(G(V_1,s)) \le sn_1 + t-r + 1$. By the same argument of (1), we get $\chi_{la}(G(V_1,s))= sn_1 + t-r + 1$.

      Suppose $2\le i\le r$. In this case, $0\le b\le 1$. Similar to the above case, $g_i$ is a local antimagic labeling with induced vertex color set $\{c_{r+1}, c_{r+2}, \ldots, c_t, c_1\}\cup [e+1, e+sn_i] \cup \{c_i + es + \frac{s}{2}(sn_i+1)\}$. Thus, $\chi_{la}(G(V_1,s)) \le sn_i + t-r + 2$. Combining with Theorem~\ref{thm-pendant}, we have $sn_i + t-r +b+ 1\le \chi_{la}(G(V_i,s)) \le sn_i + t-r + 2$. Moreover, $e=c_1$ implies that $b=1$ so $\chi_{la}(G(V_i,s)) = sn_i + t-r + 2$.

  \item For $c_{j-1} \le e < c_j$, $3\le j\le r$, we have $0\le b\le j-1$. In $G(V_i,s)$, similar to above case, $\{c_j, \ldots, c_r\} \subset [e+1, e+sn_i]$ but all the $c_1, \ldots, c_{j-1}$ are not in $[e+1, e+sn_i]$. Similar to the above case, if $1\le i\le j-1$, then $g_i$ is a local antimagic labeling with induced vertex color set $\{c_k\;|\; r+1\le k\le t\}\cup(\{c_k\;|\; 1\le k\le j-1\}\setminus\{c_i\})\cup [e+1, e+sn_i] \cup \{c_i + es + \frac{s}{2}(n_is+1)\}$ so that $\chi_{la}(G(V_i,s)) \le sn_i + t-r + j-1$. If $j\le i\le r$, then the induced vertex color set is $\{c_k\;|\; r+1\le k\le t\}\cup\{c_k\;|\; 1\le k\le j-1\}\cup [e+1, e+sn_i] \cup \{c_i + es + \frac{s}{2}(n_is+1)\}$ so that $\chi_{la}(G(V_i,s)) \le sn_i + t-r + j$.

 Note that for $1\le i\le j-1$, if $V_i$ has a pendant vertex, then $G(V_i,s)$ has $sn_i+t-r+b-1$ pendant vertices, otherwise $G(V_i,s)$ has $sn_i+t-r+b$ pendant vertices. For $j\le i\le t$, $G(V_i,s)$ also has $sn_i+t-r+b$ pendant vertices. Combining with Theorem~\ref{thm-pendant}, we have the conclusion. Moreover, if $c_{j-1}=e$ and $b=j-1$, only case (a) exists for $1\le i\le j-1$ so that $\chi_{la}(G(V_i,s))=sn_i+t-r+j-1$, and $\chi_{la}(G(V_i,s))=sn_i+t-r+j$ for $j\le i\le r$.
\end{enumerate}
In particular, if $c_{r-1}\le e < c_r$, we know the condition $e+sn_r \ge c_{r-1}$ always hold and can be omitted.
\end{proof}


\begin{example}\label{eg-wheel}\hspace*{\fill}{}
\begin{enumerate}[(1)]
  \item In~\cite[Theorem 5]{LNS}, the authors proved that every $G=W_{4k}, k\ge 1$ with $e=8k$ edges admits a local antimagic labeling with $\chi_{la}(W_{4k})=3$ such that for $k\ge 2$, $e < c_1=9k+2 < c_2 = 11k+1 < c_3 = 2k(12k+1)$, while $W_4$ has $c_1=11,c_2=15,c_3=20$. Moreover, $n_1=n_2=2k, n_3=1$. Thus, $r=t=3$. Suppose $k\ge 2$, we can add $s \ge 12k-3$ ($s$ even) pendant edges to each vertex in $V_i, i=1$ or $2$, and label them accordingly. We can also add $s\ge 3k+1$ pendant edges to the vertex in $V_3$ and label them accordingly. By Theorem~\ref{thm-addpendant}, we get $W_{4k}(V_i,s)$ that has $\chi_{la} = sn_i + 1$ for the respective $s$ and $n_i$. We can also add edges to $W_4$ similarly.
  \item The right graph under Theorem~\ref{thm-pendant}, say $G$, has $e=7$ with $t=r=2$; $c_1=7, c_2=14$ and $n_1=4, n_2=2$. Thus, $\chi_{la}(G(V_1,s)) = 4s+1$. In particular, since $c_1=e$ and $b=1$, we also can get $\chi_{la}(G(V_2,s)) = 2s + 2$.  
  \item The left graph under Theorem~\ref{thm-pendant}, say $H$, has $e=5$ with $t=r=3$; $c_1=4 < c_2 = e < c_3$ and $b=2$. Thus, $\chi_{la}H(V_3,s) = s + 3$ for $s\ge 1$.
\end{enumerate}

\end{example}

\nt By a similar argument for Theorem~\ref{thm-addpendant}, we can get the following theorem.

\begin{theorem} \label{thm-addpendant2} Suppose $G\not\cong K_{1,e}$ and $G(V_i,s)$ are as defined above with $r < t$ and $e + s \ge c_r$. Let $r+1\le i\le t$.
\begin{enumerate}[(1)]
 \item If $e < c_1$, then $\chi_{la}(G(V_i,s)) = s + t-r$.
 \item If $c_{j-1} \le e < c_j$ for $2\le j\le r$, then $s+t-r+b\le \chi_{la}(G(V_i,s))\le s + t-r + j-1$. Moreover, if $b=j-1$, then $\chi_{la}(G(V_i,s)) = s + t-r + j-1$.  
\end{enumerate}
\end{theorem}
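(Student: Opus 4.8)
The plan is to transcribe the proof of Theorem~\ref{thm-addpendant} to the pendant-singleton classes $V_{r+1},\dots,V_t$, each of which has size $n_i=1$, so that the labeling formula collapses to a one-line description. First I would record the structural facts already established in the opening lines of the proof of Theorem~\ref{thm-addpendant}: the end-vertex $v'$ of the edge labeled $e$ is a non-pendant with $f^+(v')>e$, hence $c_r>e$; every pendant of $G$ has induced color at most $e=f(u'v')$, so in particular $c_{r+1},\dots,c_t\le e$; the hypothesis $e+s\ge c_r>e$ already forces $s\ge 1$; and, since $G$ is connected with $e\ge 2$, the condition $G\not\cong K_{1,e}$ is exactly the condition $r\ge 2$ (when $r=1$ all non-pendant vertices share one color and connectedness forces $G$ to be a star), so part~(2) is non-vacuous as stated.

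Now fix $i$ with $r+1\le i\le t$, write $V_i=\{v_1\}$, and let $z$ be the unique neighbor of $v_1$ in $G$; since $v_1$ is a pendant and $|V(G)|\ge 3$, the vertex $z$ is a non-pendant, so $f^+(z)=c_m$ for some $m\le r$. Define $g_i:E(G(V_i,s))\to[1,e+s]$ by $g_i=f$ on $E(G)$ and $g_i(v_1w_{1,k})=e+k$ for $1\le k\le s$; this is the $n_i=1$ instance of the labeling used in Theorem~\ref{thm-addpendant}, and it is clearly a bijection. Then $g_i^+(v)=f^+(v)$ for every $v\ne v_1$, while $g_i^+(w_{1,k})=e+k$ and $g_i^+(v_1)=c_i+se+\tfrac{s(s+1)}{2}$. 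A one-line estimate using $s\ge 1$ shows $g_i^+(v_1)>e+s\ge c_r$, which simultaneously guarantees that $g_i^+(v_1)$ differs from $g_i^+(z)=c_m\le c_r$ and from each $g_i^+(w_{1,k})=e+k\le e+s$; since $f$ was local antimagic, this makes $g_i$ a local antimagic labeling of $G(V_i,s)$.

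Next I would count $c(g_i)$. The induced color set is $\{c_m:m\ne i\}\cup[e+1,e+s]\cup\{g_i^+(v_1)\}$, with $g_i^+(v_1)>e+s$ a genuinely new color. Among the $c_m$ with $m\ne i$, those satisfying $c_m>e$ also satisfy $e<c_m\le c_r\le e+s$ and hence already lie in $[e+1,e+s]$, whereas those with $c_m\le e$ contribute pairwise distinct new colors. In case~(1) the hypothesis $e<c_1$ forces every pendant color to be below every $c_m$ with $m\le r$, so $b=0$ and the only $c_m$ that are $\le e$ are $c_{r+1},\dots,c_t$ apart from $c_i$, giving $c(g_i)=(t-r-1)+s+1=s+t-r$; in case~(2) the hypothesis $c_{j-1}\le e<c_j$ adds exactly $c_1,\dots,c_{j-1}$ to this list, giving $c(g_i)=(j-1)+(t-r-1)+s+1=s+t-r+j-1$. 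Hence $\chi_{la}(G(V_i,s))\le s+t-r$ in case~(1) and $\chi_{la}(G(V_i,s))\le s+t-r+j-1$ in case~(2).

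For the matching lower bound, in $G(V_i,s)$ the vertex $v_1$ is no longer a pendant but $w_{1,1},\dots,w_{1,s}$ are, so $G(V_i,s)$ has $(t-r+b)-1+s$ pendants and, having $e+s\ge 3$ edges, is not $K_2$; Theorem~\ref{thm-pendant} then gives $\chi_{la}(G(V_i,s))\ge s+t-r+b$. In case~(1) this equals $s+t-r$, so equality holds; in case~(2) it is $s+t-r+b$, and when $b=j-1$ it meets the upper bound $s+t-r+j-1$, so equality holds there too. The one step needing care is the estimate $g_i^+(v_1)>e+s$: this is precisely where the hypothesis $e+s\ge c_r$ enters, and it is what both certifies that $g_i$ is local antimagic and forces the classes $c_j,\dots,c_r$ that exceed $e$ to be absorbed into $[e+1,e+s]$ rather than contributing new colors; apart from this, the argument is a routine adaptation of the proof of Theorem~\ref{thm-addpendant} with $n_i=1$, so I expect no further obstacle.
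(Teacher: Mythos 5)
Your proof is correct and is essentially the argument the paper intends: the paper gives no separate proof of this theorem (it only says ``by a similar argument for Theorem~\ref{thm-addpendant}''), and your write-up is exactly that argument specialized to the singleton pendant classes, using the same labeling $g_i$ with $n_i=1$, the same absorption of the colors $c_m>e$ into $[e+1,e+s]$ via the hypothesis $e+s\ge c_r$, and the same lower bound from Theorem~\ref{thm-pendant} applied to the $s+t-r+b-1$ pendants of $G(V_i,s)$. Your additional observations (that $G\not\cong K_{1,e}$ is equivalent to $r\ge 2$, and that $e<c_1$ forces $b=0$) are accurate and make the sketch complete.
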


\begin{example}\hspace*{\fill}{}
\begin{enumerate}[(1)]
  \item  It is easy to verify that the graph $G$ below has $\chi_{la}(G) = 9$ with $e=9 < c_1=10, c_2=20, c_3=25$ and $r=3 < t=9$. We may apply Theorem~\ref{thm-addpendant2} (1) accordingly.
\vskip3mm
\centerline{\epsfig{file=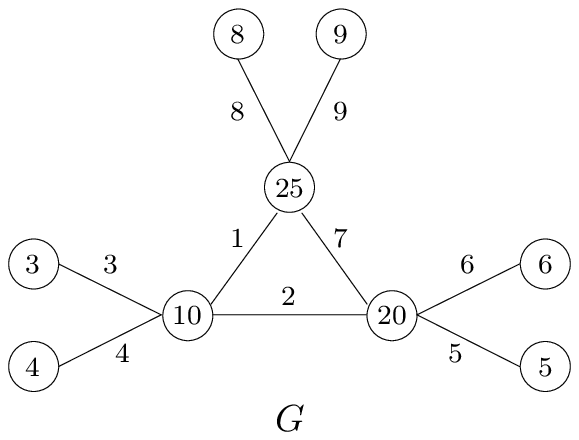, width=3cm}}
  \item We can add $k\ge 1$ pendant edges to the degree 4 vertex of the left graph under Theorem~\ref{thm-pendant} and label the edges by $6$ to $5+k$ bijectively to get a graph, say $G$, with $k+2$ pendant vertices. Clearly, $r=3 < t = k+3$ with $c_1=4, c_2=5 < e= k+5 < c_3 =(k+3)(k+8)/2$, $c_i = i+2$ for $i=4, \ldots, k+2$, $j=3$ and $b=j-1$. By Theorem~\ref{thm-addpendant2} (2), we know $\chi_{la}(G(V_i,s)) = s+k+2$ for $s\ge (k+2)(k+7)/2$.
\end{enumerate}
\end{example}

\nt Suppose $G$ is a graph containing $k\ge 1$ pendant vertices with $\chi_{la}(G)=k+1$. Keeping the notation defined before Theorem~\ref{thm-addpendant}, we have $t=k+1$. Then there is only one independent set, which is $V_r$, containing no pendant vertex. Clearly, $r\ge 1$. So we have $c_1<\cdots <c_{r-1}\le e<c_r$ and $c_{r+1}<\cdots <c_{k+1} \le e$ for $r\ge 2$, whereas for $r=1$, $G\cong K_{1,k}, k\ge 2$ with $c_1 = k(k+1)/2$ and $c_i = i-1$ for $2 \le i\le k+1$.

\begin{corollary}\label{cor-addpendant3} Keep all notation defined before Theorem~\ref{thm-addpendant}.
Suppose $G$ has $k$ pendant vertices and $\chi_{la}(G)=k+1$. Let
$$e+sn_i\ge \begin{cases}c_r &\mbox{ for } i\in[1, k+1]\setminus \{r\}\\ c_{r-1} &\mbox{ for } i = r \end{cases}$$
such that $s\ge 1$ if $n_i=1$, and $s\ge 2$ is even if $n_i\ge 2$,
 then $G(V_r,s)$ has $sn_r+k-1$ pendant vertices with $\chi_{la}(G(V_r,s)) = sn_r+k+1$, whereas $G(V_i,s)$ has $sn_i+k-1$ pendant vertices with $\chi_{la}(G(V_i,s))=sn_i+k$ when $i\in[1, k+1]\setminus \{r\}$.
\end{corollary}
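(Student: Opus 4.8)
The plan is to read off every assertion by specializing Theorems~\ref{thm-addpendant} and~\ref{thm-addpendant2} to the situation $t=k+1$, after which one only has to notice that in each clause invoked the upper and lower bounds already coincide. Two numerical observations drive the argument. First, $\chi_{la}(G)=k+1$ forces $t=k+1$, and since $G$ has exactly $t-r+b$ pendant vertices but is assumed to have $k$ of them, we get $b=r-1$. Second, by the remark preceding the corollary, for $r\ge 2$ we have $c_1<\cdots<c_{r-1}\le e<c_r$, so the index $j$ with $c_{j-1}\le e<c_j$ occurring in Theorems~\ref{thm-addpendant} and~\ref{thm-addpendant2} equals $r$; with $j=r$ the hypotheses on $e+sn_i$ and on $s$ in the corollary are exactly the hypotheses of those theorems, the inequality $e+sn_r\ge c_{r-1}$ being automatic since $e\ge c_{r-1}$. (We may assume $r\ge 2$: for $r=1$ we have $G\cong K_{1,k}$, where $c_{r-1}$ is not even defined, $G(V_1,s)\cong K_{1,k+s}$ has $\chi_{la}=k+s+1$, and the cases $i\ne r$ reduce to double stars that can be settled by hand.)

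I would then split on the position of $i$ relative to $r$. For $i=r$, Theorem~\ref{thm-addpendant} applies through case (2) when $r=2$ and through case (3)(c) when $r\ge 3$, and gives $sn_r+t-r+b+1\le\chi_{la}(G(V_r,s))\le sn_r+t-r+j$; substituting $t=k+1$, $b=r-1$, $j=r$ collapses both ends to $sn_r+k+1$. For $1\le i<r$, each $V_i$ contains a pendant vertex, because the $b=r-1$ pendants lying in $V_1\cup\cdots\cup V_r$ are, by the structure recorded before the statement, distributed one to each of $V_1,\dots,V_{r-1}$; hence Theorem~\ref{thm-addpendant} applies through case (2) when $r=2$ and through case (3)(a) when $r\ge 3$, giving $sn_i+t-r+b\le\chi_{la}(G(V_i,s))\le sn_i+t-r+j-1$, and the same substitution pinches both ends to $sn_i+k$. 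For $r<i\le k+1$ (which arises only when $r<t$), $V_i$ is a pendant singleton, so $n_i=1$, and Theorem~\ref{thm-addpendant2}(2) with $b=j-1=r-1$ yields $\chi_{la}(G(V_i,s))=s+t-r+j-1=s+k=sn_i+k$.

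It remains to count pendant vertices. Passing from $G$ to $G(V_i,s)$ adds $sn_i$ new pendant vertices and removes exactly the original pendants lying inside $V_i$ --- the unique pendant of $V_i$ when $i<r$, the single vertex of $V_i$ when $i>r$, and none when $i=r$, since $c_r>e$ exceeds every pendant color, so $V_r$ has no pendant. Starting from the $k$ pendants of $G$, this gives the stated counts, and for $i\ne r$ it is precisely this count combined with Theorem~\ref{thm-pendant} that furnishes the lower bound matching the upper bound above. The whole proof is bookkeeping rather than a new idea; the only points requiring care are steering each $i$ into the correct clause of Theorems~\ref{thm-addpendant} and~\ref{thm-addpendant2} (in particular identifying $j=r$), treating the overlap at $r=2$ and the degenerate case $r=t$, and checking that $b=r-1$ together with $j=r$ really forces the two bounds to agree in every clause used.
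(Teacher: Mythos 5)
Your argument is correct and reaches all the stated values, but it is organized differently from the paper's proof: you treat Theorems~\ref{thm-addpendant} and~\ref{thm-addpendant2} as black boxes, substitute $t=k+1$, $b=r-1$, $j=r$, and check that in every clause invoked the lower and upper bounds coincide, whereas the paper does not use those bounds for the upper estimate at all --- it re-runs the labelling $g_i$ from the proof of Theorem~\ref{thm-addpendant}, writes out the resulting colour set explicitly (with $c_r$ absorbed into $[e+1,e+sn_i]$ when $i\ne r$), and pairs its size with the pendant count via Theorem~\ref{thm-pendant}. Your routing of each index into clause (2), (3)(a), (3)(c) or Theorem~\ref{thm-addpendant2}(2) is accurate, and your justification that each $V_i$ with $i<r$ holds exactly one pendant (pendant colours are edge labels, so no class holds two, and $c_r>e$ bars $V_r$) is sound. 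Two caveats. First, the $r=1$ case genuinely cannot be delegated, since Theorem~\ref{thm-addpendant} requires $r\ge 2$ and Theorem~\ref{thm-addpendant2} excludes $K_{1,e}$; your ``settled by hand'' glosses over the one nontrivial point there, namely that labelling the $s$ new edges by $k+1,\dots,k+s$ yields only $s+k$ colours because the hypothesis $k+s=e+sn_i\ge c_r=k(k+1)/2$ forces the centre's colour to land inside the set of pendant colours --- without that hypothesis the count would be $s+k+1$. Second, your pendant count for $G(V_r,s)$ is $sn_r+k$ (which is correct, and is what the paper's own proof uses to get the lower bound $sn_r+k+1$), not the $sn_r+k-1$ printed in the statement of Corollary~\ref{cor-addpendant3}; so ``this gives the stated counts'' is not literally true for $i=r$. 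That discrepancy is a typo in the corollary's statement, and it should be flagged rather than silently matched.
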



\begin{proof} Suppose $r=1$. Recall that $e=k$ and $c_1=k(k+1)/2$. Since $G(V_1,s)\cong K_{1,k+s}$, we only consider $G(V_i,s), 2\le i\le k+1$. By labeling the $s$ added edges of $G(V_i,s)$ by $k+1$ to $k+s$ bijectively, $c_i$ is now replaced by $i-1+ks + s(s+1)/2 > k+s \ge c_1 > c_{k+1}>\cdots > c_2$. Thus, $G(V_i,s)$ now admits a local antimagic labeling with vertex color set $[1,k+s]\setminus \{i-1\} \cup \{i-1+ks + s(s+1)/2\}$. Therefore, $\chi_{la}(G(V_i,s)) \le s+k$. Since $G(V_i,s)$ has $s+k-1$ pendant vertices, by Theorem~\ref{thm-pendant}, the equality holds.

\ms\nt Consider $r\ge 2$. Suppose $i=r$, then $G(V_r,s)$ has $sn_r + k$ pendant vertices so that $\chi_{la}(G(V_r,s))\ge sn_r+k+1$. By a labeling $g_i$ as in the proof of Theorem~\ref{thm-addpendant}, we know $g_i$ is a local antimagic labeling with induced vertex color set $\{c_1,c_2,\ldots, c_{r-1}, c_{r+1}, \ldots, c_{k+1}\}\cup [e+1,e+sn_r]\cup\{g_r^+(v) \,|\, v \in V_r\}$ of size $sn_r+k+1$. By Theorem~\ref{thm-pendant}, we have $\chi_{la}(G(V_r,s))= sn_r+k+1$.

\ms\nt Suppose $i\in[1,k+1]\setminus\{r\}$, then $G(V_i,s)$ has $sn_i+k-1$ pendant vertices so that $\chi_{la}(G(V_i,s))\ge sn_i+k$. By a labeling $g_r$ as in the proof of Theorem~\ref{thm-addpendant}, we know $g_r$ is a local antimagic labeling with induced vertex color set $(\{c_1,c_2, \ldots, c_{k+1}\}\setminus\{c_i,c_r\})\cup [e+1,e+sn_i]\cup\{g_i^+(v) \,|\, v \in V_i\}$ of size $sn_r+k$. By Theorem~\ref{thm-pendant}, we have $\chi_{la}(G(V_r,s))= sn_r+k$.
\end{proof}

\begin{example} Consider $W_4$ under Example~\ref{eg-wheel}. We have $W_4(V_3,12)$ with $k=12$, $\chi_{la}(W_4(V_3,12)) = 13$ and $c_1=11, c_2=15 < e=20 < c_3=194$. Moreover, $P_n, n\ge 3$ and $K_{1,k}$, is a tree with $k\ge 2$ pendant vertices and $\chi_{la} = k+1$. We may apply Corollary~\ref{cor-addpendant3} accordingly.
\end{example}

\nt Thus, we get the following.


\begin{theorem} For $k\ge 1$, there exist infinitely many graphs $G$ with $k \ge \chi(G)-1 \ge 1$ pendant vertices and $\chi_{la}(G) = k+1$. \end{theorem}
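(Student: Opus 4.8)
The plan is to build the infinite family starting from the smallest honest examples and then inflate them via the pendant-adding machinery of Section~2. First I would fix $k\ge 1$ and exhibit a single base graph $G_0$ with exactly $k$ pendant vertices, $\chi_{la}(G_0)=k+1$, and $\chi(G_0)\le k+1$; the path $P_n$ ($n\ge 3$) works for $k=2$ since it has two pendants, $\chi_{la}(P_n)=3$, and is bipartite, while the star $K_{1,k}$ works for every $k\ge 2$ with $\chi(K_{1,k})=2\le k+1$, and for $k=1$ the right-hand example displayed under Theorem~\ref{thm-pendant} (the theta-type graph) has one pendant and $\chi_{la}=2$. So in all cases a base graph exists with $k\ge\chi(G_0)-1\ge 1$.

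Next I would apply Corollary~\ref{cor-addpendant3} to $G_0$ repeatedly. Writing $t=k+1$ for $G_0$ and keeping the notation before Theorem~\ref{thm-addpendant}, Corollary~\ref{cor-addpendant3} produces, for every admissible $s$, a graph $G_0(V_r,s)$ with $sn_r+k-1$ pendant vertices and $\chi_{la}(G_0(V_r,s))=sn_r+k+1$; thus setting $k'=sn_r+k-1$ we again have a graph whose local antimagic chromatic number is exactly $k'+1$. By choosing different values of $s$ (all $s$ even and large enough when $n_r\ge 2$, or all $s\ge 1$ when $n_r=1$) we get infinitely many non-isomorphic such graphs for that particular value of $k'$, and then one iterates: each new graph again has $\chi_{la}=k'+1$ with the unique color class $V_{r'}$ free of pendants, so Corollary~\ref{cor-addpendant3} applies afresh. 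The key point is that adding pendant edges does not raise the chromatic number beyond what we need: a graph together with extra pendant edges has chromatic number $\max\{\chi(G_0),2\}$, so $\chi(G_0(V_r,s))\le k+1\le k'+1$, preserving the inequality $k'\ge\chi(\cdot)-1\ge 1$.

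The only real obstacle is bookkeeping the hypotheses of Corollary~\ref{cor-addpendant3}: one must check that the chosen base graphs genuinely have $\chi_{la}=k+1$ realized by a labeling whose unique pendant-free class is some $V_r$ with the stated ordering $c_1<\cdots<c_{r-1}\le e<c_r$ and $c_{r+1}<\cdots<c_{k+1}\le e$, and that the inequality $e+sn_i\ge c_r$ (resp. $c_{r-1}$) can be met by infinitely many $s$. For $K_{1,k}$ this is immediate (the $r=1$ branch of the corollary's proof handles it directly), and for $P_n$ one simply records the standard optimal labeling. Thus I would structure the proof as: (i) produce a base graph for each $k$; (ii) invoke Corollary~\ref{cor-addpendant3} to obtain, for infinitely many $s$, descendants with prescribed pendant count and $\chi_{la}$ one larger; (iii) observe the chromatic-number inequality is inherited; (iv) conclude the family is infinite by varying $s$ (and, if desired, iterating). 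No delicate estimate is needed beyond what Section~2 already supplies.
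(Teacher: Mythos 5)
Your overall strategy --- exhibit base graphs ($K_{1,k}$, $P_n$, the displayed one-pendant example) attaining the bound of Theorem~\ref{thm-pendant} and then generate further examples via Corollary~\ref{cor-addpendant3} --- is exactly what the paper does; the paper offers no proof beyond the example preceding the theorem and the words ``Thus, we get the following.'' Your added observations (that attaching pendant edges cannot raise the chromatic number above $\max\{\chi(G_0),2\}$, so $k\ge\chi(G)-1\ge1$ is inherited, and that the corollary's hypotheses must be checked for each base graph) are correct and worth making explicit.

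There is, however, a genuine gap in the step where you claim infinitude \emph{for a fixed pendant count}. Varying $s$ in Corollary~\ref{cor-addpendant3} does not give ``infinitely many non-isomorphic such graphs for that particular value of $k'$'': each admissible $s$ yields one graph with $k'=sn_i+k-1$ (resp.\ $sn_r+k$) pendants, and different $s$ give different $k'$. Iteration does not rescue this either: to reapply the corollary without changing the pendant count you would need $s'n'_j=1$, but the hypothesis $e'+s'n'_j\ge c'_{r'}$ together with $c'_{r'}>e'$ forces $s'n'_j\ge c'_{r'}-e'$, and after one application the new top colour $c_i+es+\frac{s}{2}(sn_i+1)$ exceeds the new size $e+sn_i$ by far more than $1$, so $s'n'_j=1$ is never admissible thereafter. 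Hence your construction yields, for each fixed $k$, only finitely many graphs (for $k=1$, only the single displayed example). If the theorem is read as ``for each fixed $k\ge1$ there are infinitely many such graphs,'' a different mechanism is needed (e.g.\ the paths $P_n$, $n\ge3$, already form an infinite family for $k=2$, and an analogous genuinely infinite family must be produced for every other $k$); if instead $k$ is understood to be determined by $G$, so that one only needs infinitely many graphs attaining $\chi_{la}=k+1$ with $k$ their own pendant number, then your argument, like the paper's, suffices.
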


\nt Let $G-e$ be the graph $G$ with an edge $e$ deleted. In~\cite[Lemmas 2.2-2.4]{LSN}, the authors obtained sufficient conditions for $\chi_{la}(G) = \chi_{la}(G-e)$. We note that these lemmas may be applied to $G(V_i,s)$ if all vertices in each $V_j, 1\le j\le t$, are of same degree like $W_{4k}(V_i,s)$ in Example~\ref{eg-wheel}. \\

\section{Existing results and open problems}


\nt In\cite{Arumugam+L+P+W},~~\cite[Theorem 7 and Theoreom 9]{LNS} and~\cite[Theorems 2.4-2.6, 2.9, 3.1, Lemma 2.10 and Section 3]{LSN2}, the authors also determined the exact value of $\chi_{la}(G)$ for many families of $G$ with pendant vertices. Particularly, they showed that there are infinitely many caterpillars of $k$-pendant vertices with $\chi_{la} = k+1$ or $\chi_{la} \ge k+2$. Note that~\cite[Theorem~2.6]{LSN2} corrected Theorem~2.2 in~\cite{Nuris+S+D}. A lobster is a tree such that the removal of its pendant vertices resulted in a caterpillar. Note that the graph $Sp(2^{[n]})$ is also a lobster. We end this paper with the followings.

\begin{conjecture} Every tree $T_k$, other than certain caterpillars, spiders and lobsters, with $k\ge 2$ pendant vertices has $\chi_{la}(T_k) = k+1$. \end{conjecture}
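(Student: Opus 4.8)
The lower bound $\chi_{la}(T_k)\ge k+1$ is immediate from Theorem~\ref{thm-pendant}, so a proof must do two things: construct, for every tree with $k$ pendants outside the exceptional families, a local antimagic labeling with exactly $k+1$ induced colors; and identify precisely which caterpillars, spiders and lobsters fail this. A useful first observation is that under any local antimagic labeling $f$ the $k$ pendant vertices of a tree automatically receive $k$ pairwise distinct colors, since $f^+(w)=f(e_w)$ for a pendant $w$ with pendant edge $e_w$ and $f$ is injective. Hence $\chi_{la}(T_k)=k+1$ is equivalent to the existence of an $f$ under which every \emph{internal} vertex receives either one fixed new color or one of the $k$ pendant-edge labels (the latter being allowed only for internal vertices not adjacent to the corresponding pendant). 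This reframes the problem as: properly color the internal vertices of $T_k$ from a palette of size $k+1$ (the $k$ pendant-edge values together with one fresh value), and realize that coloring by an edge bijection.

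The plan I would follow is an induction on the number $m=|V(T_k)|-k$ of internal vertices, using the ``adding pendant edges'' machinery of Section~2 as the inductive engine. The cases $m\le 2$ reduce to stars and double stars, where both the value of $\chi_{la}$ and the exceptional instances are recorded in~\cite{LSN2} and by direct inspection. For $m\ge 3$, let $T'$ be the tree obtained from $T_k$ by deleting all its pendant vertices; then $T'$ has $m\ge 3$ vertices and hence at least two leaves. Choose a leaf $w$ of $T'$; it has a unique neighbor in $T'$, exactly $s\ge 1$ pendant neighbors in $T_k$, and $\deg_{T_k}(w)=s+1$. Deleting the $s$ pendant neighbors of $w$ yields a tree $G$ with $m-1$ internal vertices and $k'=k-s+1$ pendants (the vertex $w$ becomes a pendant of $G$), and $T_k=G(\{w\},s)$ in the notation preceding Theorem~\ref{thm-addpendant}. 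If $G$ is not exceptional, the inductive hypothesis gives $\chi_{la}(G)=k'+1$, so in the associated partition the unique color class with no pendant is $V_r$; since $w$ is a pendant of $G$ it lies in some $V_i$ with $i\ne r$, and Corollary~\ref{cor-addpendant3} (with $n_i=1$) yields $\chi_{la}(G(\{w\},s))=s+k'=k+1$, as required.

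The main obstacle is the numerical side condition $e+sn_i\ge c_r$ built into Theorem~\ref{thm-addpendant} and Corollary~\ref{cor-addpendant3}: here $e=|E(G)|$, $n_i=1$, and $s=\deg_{T_k}(w)-1$ is dictated by the tree rather than ours to choose, while $c_r$, the color of the pendant-free class of the optimal labeling of $G$, can be much larger than $|E(G)|$, of order $|E(G)|^2$, when that vertex has high degree. So the induction does not run for free. I would attack this in one of two ways. First, strengthen the inductive statement to produce a $(k+1)$-color labeling in which, in addition, $c_r$ is small --- concretely, the pendant-free vertex carries the edge labeled $|E(G)|$, so that $c_r\le e+1$; this extra property is consistent with Lemma~\ref{lem-pendant} and one checks it survives the extension $g_i$ used in the proof of Theorem~\ref{thm-addpendant}, so the side condition becomes automatic. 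Alternatively, when the side condition fails, discard the labeling of $G$ and relabel $T_k$ directly, placing the $s$ largest labels on the new pendant edges at $w$ and exploiting the presence (since $m\ge 3$) of an internal vertex other than $w$ and its neighbor whose color can absorb the one unavoidable coincidence --- this is exactly the style of argument used above to evaluate $\chi_{la}(Sp(2^{[n]}))$ and $\chi_{la}(W_{4k}(V_i,s))$.

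The remaining delicate point, which I expect to be as hard as the construction itself, is to turn ``certain caterpillars, spiders and lobsters'' into an explicit list. For each thin tree (derived tree a path, a star, or a short caterpillar) one must decide, via Lemma~\ref{lem-pendant} --- the largest label cannot sit on a non-pendant edge once $c(f)=k+1$ --- whether $\chi_{la}$ is forced up to $k+2$ (as for $Sp(2^{[n]})$ with $n\ge 4$, or for the double star with two leaves on each center) or not, and then prove that every tree whose derived tree avoids these special shapes admits the $(k+1)$-construction above. Separating these two regimes cleanly, so that the induction's exceptional set is exactly the stated list and is closed under the reduction $T_k\mapsto G$, is the crux of a complete proof.
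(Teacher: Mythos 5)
First, note that the statement you were asked to prove is posed in the paper as a \emph{conjecture}: the paper offers no proof, and the conjecture is not even fully specified, since the exceptional families (``certain caterpillars, spiders and lobsters'') are left undetermined. Your proposal is accordingly a plan rather than a proof, and you are candid about this; the overall strategy (induct on the derived tree, using Corollary~\ref{cor-addpendant3} to reattach the pendants at a leaf of the derived tree) is a sensible reading of the paper's Section~2 machinery. But two of its load-bearing steps fail as stated.

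The first is your proposed fix for the side condition $e+sn_i\ge c_r$. You suggest strengthening the induction so that the pendant-free class carries the edge labeled $|E(G)|$, ``so that $c_r\le e+1$.'' This is backwards: a non-pendant vertex incident to the edge labeled $e$ has induced color at least $e+1$, with equality only if it has degree $2$ and its other edge is labeled $1$. More fundamentally, $c_r$ is bounded below by $\Delta_r(\Delta_r+1)/2$, where $\Delta_r$ is the largest degree occurring in $V_r$, and this can exceed $e+s$ by an arbitrarily large margin. The paper's own computation of $\chi_{la}(Sp(2^{[n]}))=n+2$ for $n\ge4$ is exactly an instance where the core's color $n(n+1)/2$ overwhelms $e=2n$ and the conclusion $\chi_{la}=k+1$ is \emph{false}; so the side condition is not a removable technicality but the very mechanism producing the exceptional families, and no uniform strengthening of the inductive hypothesis can make it vacuous. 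Second, the identification $T_k=G(\{w\},s)$ requires the inductive labeling of $G$ to place the pendant $w$ in a \emph{singleton} color class ($n_i=1$); if $w$ is instead one of the $b$ pendants absorbed into a multi-vertex class $V_i$ with $i\le r$, then $G(V_i,s)$ attaches $s$ pendant edges to every vertex of $V_i$ and is not the tree $T_k$ you want. Finally, the base cases and the closure of the (unspecified) exceptional set under your reduction are precisely where the content of the conjecture lies, and the proposal leaves both open. None of this can be checked against the paper, which proves nothing here.
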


\begin{problem} Characterize all graphs $G$ with $k\ge \chi(G) - 1 \ge 1$ pendant vertice(s) and $\chi_{la}(G) = k+1$. \end{problem}


\begin{thebibliography}{100}

\bibitem{Arumugam} S. Arumugam, K. Premalatha, M. Ba\v{c}a and A. Semani\v{c}ov\'{a}-Fe\v{n}ov\v{c}\'{i}kov\'{a}, Local antimagic vertex coloring of a graph, {\it Graphs and Combin.}, {\bf33} (2017)  275--285.

\bibitem{Arumugam+L+P+W} S. Arumugam, Y.C. Lee, K. Premalatha, T.M. Wang, On local antimagic vertex coloring for corona products of graphs, (2018) arXiv:1808.04956v1.



\bibitem{Frucht+Harary} R. Frucht and F. Harary, On the corona of two graphs, {\it Aequationes Math.} {\bf 4} (1970), 322-325.

\bibitem{Haslegrave} J. Haslegrave, Proof of a local antimagic conjecture, {\it Discrete Math. Theor. Comput. Sci.} {\bf20}(1) (2018). https://doi.org/10.23638/DMTCS-20-1-18

\bibitem{LNS} G.C. Lau, H.K. Ng and W.C. Shiu, Affirmative solutions on local antimagic chromatic number, (2019) arXiv:1805.02886.

\bibitem{LSN} G.C. Lau, W.C. Shiu and H.K. Ng, On local antimagic chromatic number of cycle-related join graphs, (2018) Discuss. Math. Graph Theory, accepted.

\bibitem{LSN2} G.C. Lau, W.C. Shiu and H.K. Ng, On local antimagic chromatic number of graphs with cut-vertices, (2019) arXiv:1805.04801.

\bibitem{Nuris+S+D} Nuris Hisan Nazula, Slamin and Dafik, Local antimagic vertex coloring of unicyclic graphs, {\it Indonesian Journal of Combinatorics}  {\bf2(1)} (2018), 30-34.
\end{thebibliography}
\end{document}